\documentclass{article}
\usepackage{amssymb,amsmath}
\usepackage{graphicx}
\usepackage{CJK}

\textheight 8.5in
\textwidth 6.5 in
\oddsidemargin 0in
\topmargin 0in

\def\qed{\hfill {\hbox{${\vcenter{\vbox{               
   \hrule height 0.4pt\hbox{\vrule width 0.4pt height 6pt
   \kern5pt\vrule width 0.4pt}\hrule height 0.4pt}}}$}}}

\def\tr{\triangleright}
\def\bar{\overline}

\newtheorem{theorem}{Theorem}
\newtheorem{definition}{Definition}

\newtheorem{proposition}[theorem]{Proposition}
\newtheorem{corollary}[theorem]{Corollary}
\newtheorem{example}{Example}

\newtheorem{remark}{Remark}

\newenvironment{proof}[1][Proof]{\smallskip\noindent{\bf #1.}\quad}%
{\qed\par\medskip}

\date{}

\title{\Large \textbf{Bikei, Involutory Biracks and unoriented link invariants}}

\author{Sinan Aksoy\footnote{Email:\ sga@uchicago.edu} \and 
Sam Nelson\footnote{Email:\ knots@esotericka.org}}

\begin{document}
\maketitle

\begin{abstract}
We identify a subcategory of biracks which define counting invariants of 
unoriented links, which we call \textit{involutory biracks}. In particular,
involutory biracks of birack rank $N=1$ are biquandles, which we call 
\textit{bikei} or \begin{CJK*}{UTF8}{min}双圭\end{CJK*}. We define counting 
invariants of unoriented classical and virtual links using finite 
involutory biracks, and we give an example of a non-involutory birack 
whose counting invariant detects the non-invertibility of a virtual knot.
\end{abstract}

\medskip

\quad
\parbox{5.5in}{
\textsc{Keywords:} Biquandles, Yang-Baxter equation, unoriented link 
invariants, enhancements of counting invariants
\smallskip

\textsc{2010 MSC:} 57M27, 57M25
}

\section{\large\textbf{Introduction}}

Much attention has recently been focused on the study of invariants of oriented
knots and links defined using algebraic objects known as \textit{biquandles},
solutions to the set-theoretic Yang-Baxter equation satisfying certain
invertibility conditions \cite{CES2, CS, FJK, KR}. Every oriented classical or 
virtual knot has a \textit{fundamental biquandle} whose isomorphism class 
is a strong invariant -- indeed, a complete invariant of classical knots 
when considered up to ambient homeomorphism.

Comparing isomorphism classes of fundamental biquandles directly is 
generally impractical, so for more practical biquandle-derived invariants
we can either look to functorial invariants like the Alexander and
quaternionic biquandle polynomials studied in \cite{BF1,BF2,BF3,BF4} which 
generalize the
classical Alexander polynomial, or to representational invariants such as
the counting invariant $\Phi^{\mathbb{Z}}_{X}(L)=|\mathrm{Hom}(FB(L),X)|$
where $X$ is a finite biquandle \cite{CN3,NV}.

\textit{Quandles} are a special case of biquandles. Of particular
interest are the \textit{CJKLS quandle 2-cocycle} invariants of oriented
classical and virtual knots and links defined in terms of homomorphisms
from the fundamental quandle of a knot to a finite quandle, enhanced by
a \textit{Boltzmann weight} defined from an element of the second
cohomology of the finite quandle in question \cite{CJKLS}. The study of 
quandle homology has recently turned to \textit{involutory quandles}, 
also known as \textit{kei} or \begin{CJK*}{UTF8}{min}圭\end{CJK*}; these 
are the type of quandles suitable for defining invariants of unoriented
knots and links \cite{NP}. Kei were considered as far back as 1945 \cite{T}.

\textit{Racks} are the objects analogous to quandles which are appropriate 
for defining representational invariants of blackboard-framed oriented
knots and links \cite{FR}. Racks are a special case of \textit{biracks},
recently studied in papers such as \cite{BF2} and \cite{N}.

In this paper we generalize the kei idea to the setting of biracks,
defining counting invariants for unoriented framed and unframed 
knots and links. In section 
\ref{BK} we identify the necessary and sufficient conditions for a
birack to be involutory, and we give examples of involutory biracks,
involutory racks, and involutory biquandles (also known as \textit{bikei} or
\begin{CJK*}{UTF8}{min}双圭\end{CJK*})
as well as a schematic map of the various types of 
biracks associated to categories of knots and links. In section \ref{CE} we
define counting invariants associated to involutory biracks and
give some computations and examples, including a biquandle whose counting
invariant distinguishes a virtual knot from its inverse, answering a question
posed to the second author by Xiao-Song Lin. In section \ref{Q} we list
some open questions for future research.

\section{\large\textbf{Bikei and Involutory Biracks}}\label{BK}

We begin with a definition. (See \cite{FJK,N}).

\begin{definition}
\textup{A \textit{birack} $(X,B)$ is a set $X$ with a map 
$B:X\times X\to X\times X$ which satisfies}
\begin{itemize}
\item \textup{$B$ is invertible, i.e there exists a map 
$B^{-1}:X\times X\to X\times X$
satisfying $B\circ B^{-1}=\mathrm{Id}_{X\times X}=B^{-1}\circ B$,}
\item \textup{$B$ is \textit{sideways invertible}, i.e there exists a unique 
invertible map $S:X\times X\to X\times X$ satisfying}
\[S(B_1(x,y),x)=(B_2(x,y),y),\]
\textup{for all $x,y\in X$,}
\item \textup{The sideways maps $S$ and $S^{-1}$ are \textit{diagonally 
bijective}, i.e. the compositions $S_1^{\pm 1}\circ\Delta$ and
$S_2^{\pm 1}\circ\Delta$ of the components of $S^{\pm 1}$ with 
the diagonal map $\Delta(x)=(x,x)$ are bijections, and} 
\item \textup{$B$ is a solution to the 
\textit{set-theoretic Yang-Baxter equation}:}
\[(B\times \mathrm{Id})\circ(\mathrm{Id}\times B)\circ(B\times \mathrm{Id})=
(\mathrm{Id}\times B)\circ(B\times \mathrm{Id})\circ(\mathrm{Id}\times B)\]
\end{itemize}
\textup{The components of $B$ and $B^{-1}$ are sometimes written with the 
alternate notation $B(x,y)=(y^x,x_y)$ and $B^{-1}(x,y)=(x_{\bar{y}},y^{\bar{x}})$.}
\end{definition}

The birack axioms are motivated by the oriented Reidemeister moves, where we
interpret the map $B$ as a map of semiarc labels going through a crossing:
\[\includegraphics{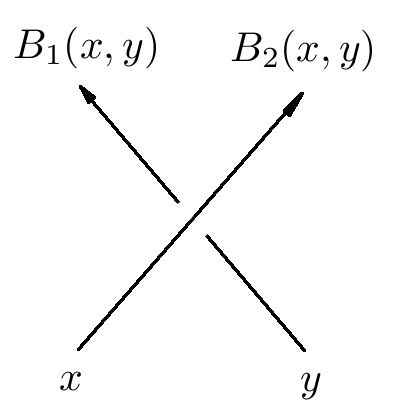} \quad \includegraphics{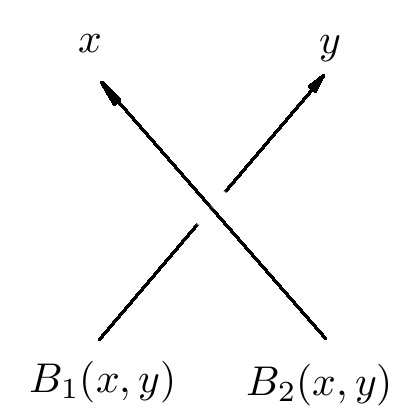}
\quad \includegraphics{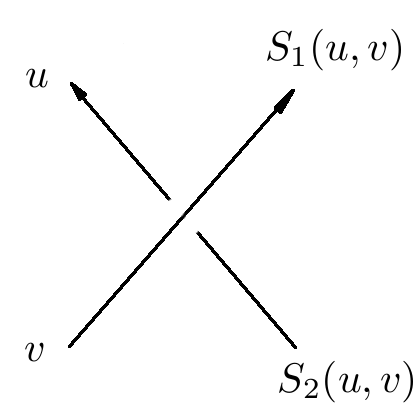}\]

As shown in \cite{N}, sideways invertibility defines bijections
$\alpha:X\to X$ and $\pi:X\to X$ defined by $\alpha=(S_2^{-1}\circ\Delta)^{-1}$
and $\pi=S_1^{-1}\circ\Delta\circ \alpha$ which give the labels of semiarcs in 
a blackboard-framed type I move:

\[\includegraphics{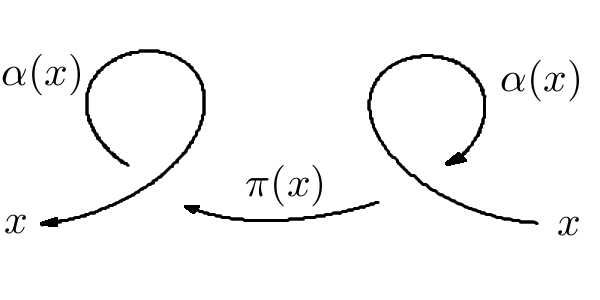}\]

The bijection $\pi(x)$ is known as the \textit{kink map}, and its exponent
$N$, i.e. the smallest integer $N$ such that $\pi^N(x)=x$ for all $x\in X$,
is known as the \textit{birack rank} or \textit{birack characteristic} of
$X$.

We would like to modify the birack axioms to remove the orientation 
requirement with the goal of obtaining invariants of unoriented links.
We will use the convention that if a crossing is positioned with the 
understrand on the right, then the upward map will be our $B$ map:
\[\includegraphics{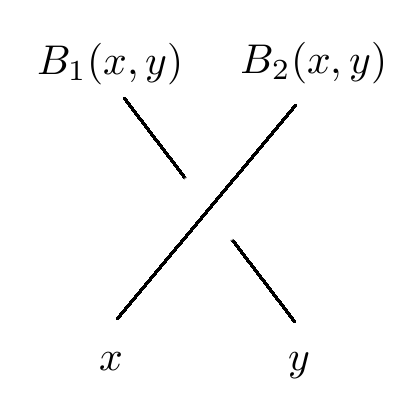}\]

We first observe that after rotating the crossing by 
$180^{\circ}$, we have $B(x,y)=(u,v)$ implies $B(v,u)=(y,x)$:
\[\includegraphics{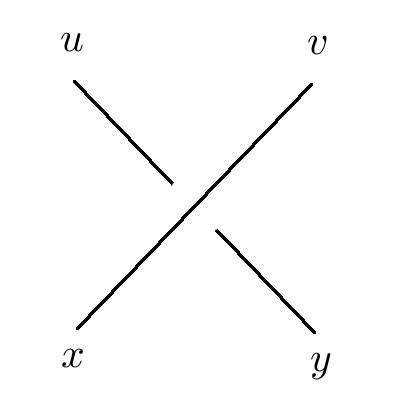} \quad \includegraphics{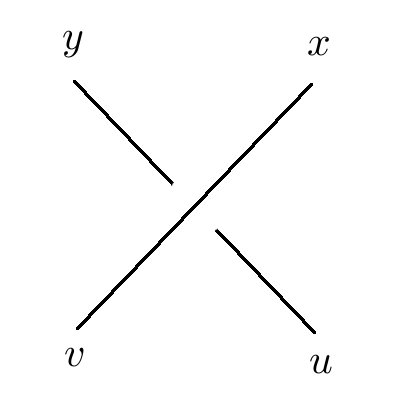}\]
For any set $X$, let 
$\tau:X\times X\to X\times X$ be the map $\tau(x,y)=(y,x)$. Then 
$B(v,u)=(y,x)$ implies $\tau\circ B \circ\tau (u,v)=(x,y),$ and we have
\[B^{-1}=\tau \circ B\circ\tau\]
or equivalently $(\tau\circ B)^2=\mathrm{Id}$.

The Reidemeister II move then requires that the upward map
at a crossing with the understrand on the left is $B^{-1}=\tau \circ B\circ 
\tau$, as well as that the sideways map $S$ is $B^{-1}$:
\[\includegraphics{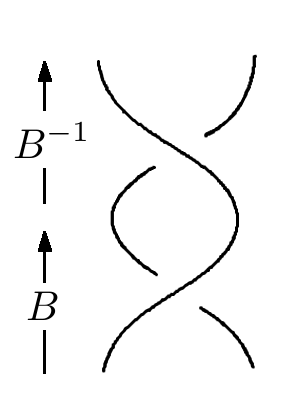}\quad \quad \quad
\includegraphics{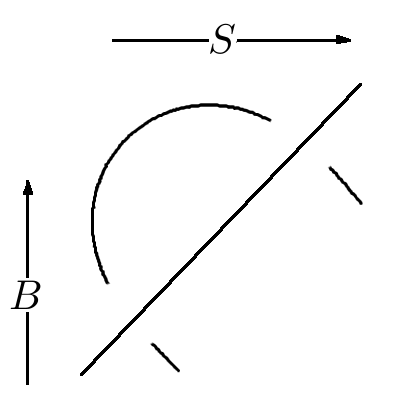} 
\]

With these observations, we can now define what it means for a birack to
be involutory.

\begin{definition}
\textup{An \textit{involutory birack} $(X,B)$ is a set $X$ with a map 
$B:X\times X\to X\times X$ which satisfies}
\begin{itemize}
\item $(\tau\circ B)^2=\mathrm{Id}$ \textup{where} $\tau(x,y)=(y,x)$,
\item \textup{The compositions $B^{\pm 1}_1\circ\Delta$ and
$B^{\pm 1}_2\circ\Delta$ of the diagonal map 
$\Delta(x)=(x,x)$ with the components of $B^{\pm 1}$ are bijections, and} 
\item \textup{$B$ is a solution to the 
\textit{set-theoretic Yang-Baxter equation}:}
\[(B\times \mathrm{Id})\circ(\mathrm{Id}\times B)\circ(B\times \mathrm{Id})=
(\mathrm{Id}\times B)\circ(B\times \mathrm{Id})\circ(\mathrm{Id}\times B).\]
\end{itemize}
\textup{An involutory birack with birack rank $N=1$ is an \textit{involutory 
biquandle} or \textit{bikei} (\begin{CJK*}{UTF8}{min}双圭\end{CJK*}).}
\end{definition}

\begin{remark}\label{rmk1}
\textup{The term ``involutory'' refers to the fact that the maps 
$u_x,l_x:X\to X$
defined by $u_x(y)=B_1(x,y)$ and $l_x(y)=B_2(y,x)$ are involutions, i.e. 
$u_x^2=\mathrm{Id}=l_x^2$ for all $x\in X$.}
\[\includegraphics{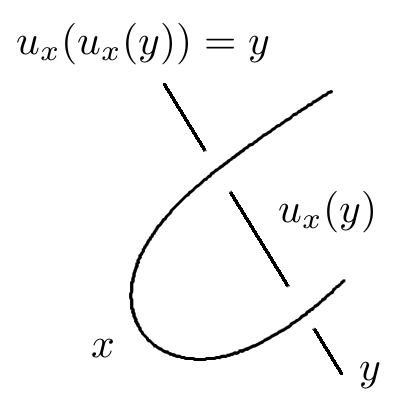}\quad \includegraphics{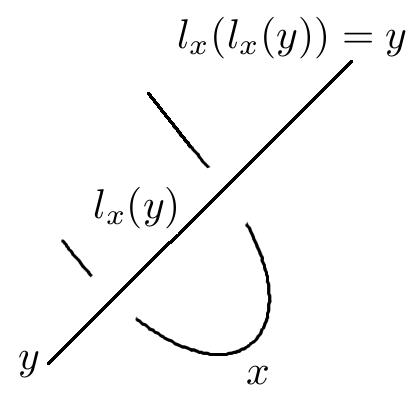}\]
\end{remark}

\begin{example}\label{ex1}
\textup{Recall from \cite{N} that any set $X$ has the structure
of a birack defined by $B(x,y)=(\sigma(y),\rho(x))$ where $\sigma,\rho:X\to X$
are commuting bijections; these are known as \textit{constant action biracks}. 
Such a birack is involutory iff $\rho^2=\sigma^2=\mathrm{Id}$, since}
\[\tau \circ B\circ \tau \circ B(x,y)=(\rho^2(x),\sigma^2(y)).\]
\end{example}

\begin{example}
\textup{A birack in which $B_2(x,y)=x$ is a \textit{rack}. A rack is then 
involutory iff $B_1(x,B_1(x,y))=y$ for all $x,y\in X$. Alternate notations for
the rack operation $B_1(x,y)$ include $y\tr x$ and $y^x$; using these 
conventions, a rack is involutory iff $(y\tr x)\tr x=y$ or $(y^x)^x=y$ for 
all $x,y\in X$. A rack of birack rank $N=1$ is known as a \textit{quandle}; 
involutory quandles are also known as \textit{kei}.}
\end{example}

We summarize the relatonships between these objects with the following Venn 
diagram. Note that the sizes of the circles are not meant to reflect 
proportions.
\[\scalebox{0.8}{\includegraphics{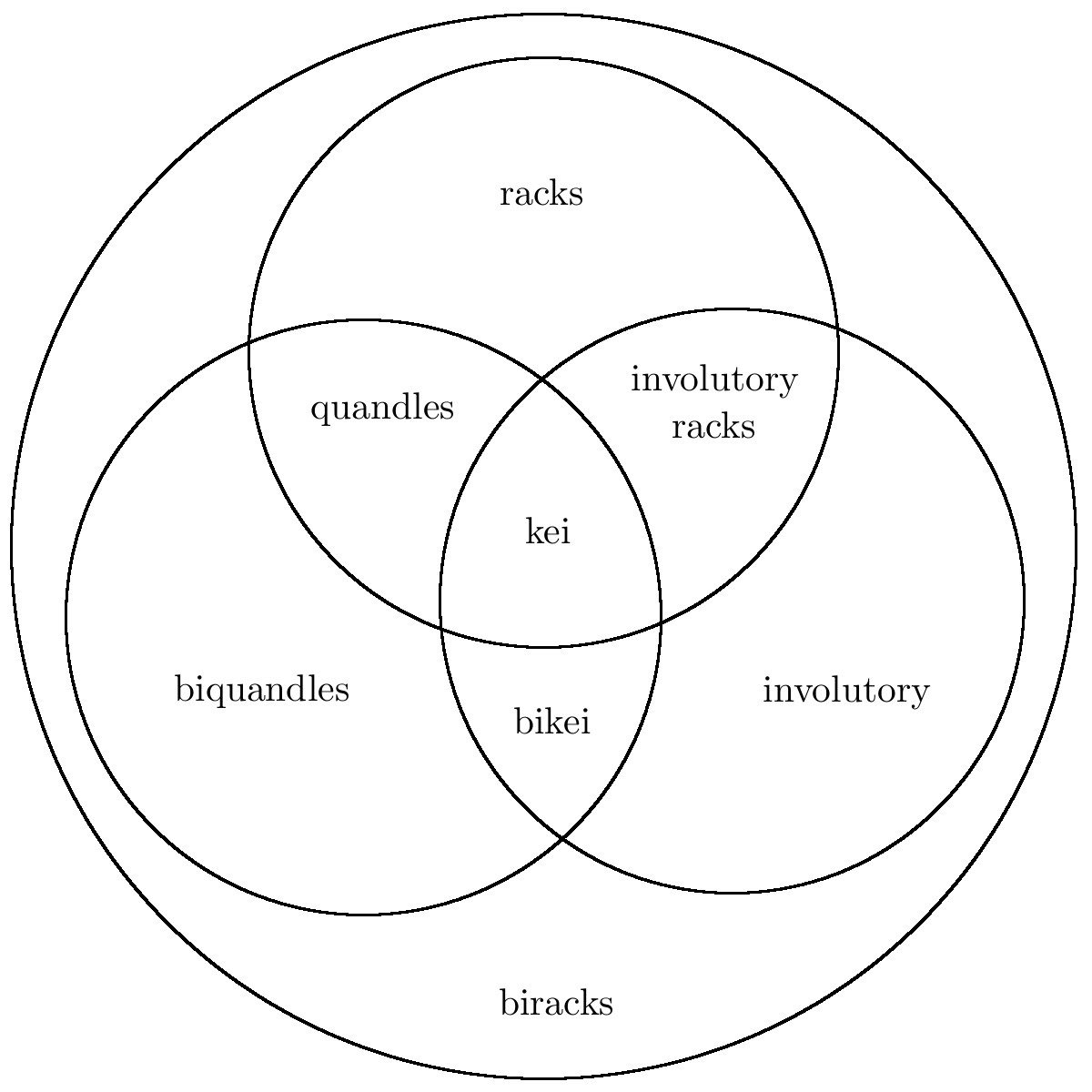}}\]

Recall from \cite{N} that any module over the ring
$\tilde\Lambda=\mathbb{Z}[t^{\pm 1,s,r^{\pm 1}}]/(s^2-(1-tr)s)$  has the 
structure of a birack defined by $B(x,y)=(sx+ty,rx)$, known as a 
\textit{$(t,s,r)$-birack.} The kink map of such a birack is $\pi(x)=(tr+s)x$, 
so the birack rank of a $(t,s,r)$-birack is the smallest integer $N$ such that
 $(tr+s)^N=1$. A $(t,s,r)$-birack in which $r=1$ is rack known as a 
\textit{$(t,s)$-rack} \cite{FR,N,CN1}; a $(t,s,r)$-birack with rank $N=1$
is an \textit{Alexander biquandle}, and if we have both $r=1$ and $N=1$
we have an \textit{Alexander quandle}.

\begin{proposition}
\textup{A $(t,s,r)$-birack is involutory iff we have $t^2=r^2=1$ and 
$(t+r)s=(1-r)s=0$.}
\end{proposition}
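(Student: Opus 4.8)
The plan is to exploit the fact that a $(t,s,r)$-birack is already a birack satisfying the Yang--Baxter equation, so that being involutory costs only two short computations over $\tilde\Lambda$: checking $(\tau\circ B)^2=\mathrm{Id}$, and checking that the (automatically determined) sideways map $S$ equals $B^{-1}$, which is what the Reidemeister~II move forces, as observed just before the definition. Once those two hold, the required diagonal bijectivity of $B^{\pm1}$ is automatic, since $S=B^{-1}$ transports the birack's diagonal bijectivity of $S^{\pm1}$ onto $B^{\pm1}$, and the Yang--Baxter axiom is inherited outright.

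First I would compute $(\tau\circ B)^2$. From $B(x,y)=(sx+ty,rx)$ one gets $\tau\circ B(x,y)=(rx,sx+ty)$, and applying $\tau\circ B$ once more gives
\[(\tau\circ B)^2(x,y)=\bigl(r^2x,\ (rs+ts)x+t^2y\bigr).\]
Requiring this to equal $(x,y)$ for all $x,y$ in every $\tilde\Lambda$-module is the same as requiring the corresponding elements of $\tilde\Lambda$ to vanish, i.e. $r^2=1$, $t^2=1$, and (using commutativity of $\tilde\Lambda$) $rs+ts=(t+r)s=0$.

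Next I would pin down $S$ and $B^{-1}$. Solving the defining identity $S(B_1(x,y),x)=(B_2(x,y),y)$ for $S$ yields $S(x,y)=(ry,\ t^{-1}x-t^{-1}sy)$, while inverting $B$ directly yields $B^{-1}(x,y)=(r^{-1}y,\ t^{-1}x-t^{-1}sr^{-1}y)$. Equating these componentwise gives $r=r^{-1}$, i.e. $r^2=1$ (already obtained), together with $s=sr^{-1}$, i.e. $(1-r)s=0$. Combining the two computations, ``$(X,B)$ involutory'' is equivalent to the four relations $t^2=r^2=1$ and $(t+r)s=(1-r)s=0$. For the converse direction one checks these suffice: they force $s^2=(1-tr)s=2s$ and $st=-s$, hence $(s+t)^2=s^2+2st+t^2=1$, so $B_1\circ\Delta$ (multiplication by $s+t$) is bijective; $B_2\circ\Delta$ and $B^{-1}_1\circ\Delta$ are multiplication by the units $r,r^{-1}$; and $B^{-1}_2\circ\Delta$ is multiplication by $t^{-1}(1-sr^{-1})=t^{-1}(1-s)$, where $1-s$ is a unit of $\tilde\Lambda$ with inverse $1+t^{-1}r^{-1}s$ (this already follows from $s^2=(1-tr)s$ alone). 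So all involutory axioms hold.

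The work here is entirely routine linear algebra; the one thing to be careful about is the bookkeeping that makes this an ``if and only if''. Specifically: one must be sure that $(\tau\circ B)^2=\mathrm{Id}$ together with $S=B^{-1}$ really captures involutoriness with nothing further squeezed out of the diagonal-bijectivity axiom — this is exactly the remark above that $S=B^{-1}$ carries the diagonal bijectivity of $S^{\pm1}$ (which holds because every $(t,s,r)$-birack is a birack) over to $B^{\pm1}$; and one must be sure that ``the identity holds on every $\tilde\Lambda$-module'' is genuinely equivalent to ``the element of $\tilde\Lambda$ is zero'', so that $(t+r)s=0$ and $(1-r)s=0$ are honest new constraints rather than consequences of the defining relation $s^2=(1-tr)s$.
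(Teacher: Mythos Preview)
Your approach is essentially the paper's: compute $(\tau\circ B)^2$ to obtain $t^2=r^2=1$ and $(t+r)s=0$, then use the condition $S=B^{-1}$ (the paper phrases it as $S\circ B=\mathrm{Id}$, which is the same thing) to obtain $r^2=1$ and $(1-r)s=0$. Your version is in fact more thorough than the paper's, since you explicitly address the converse direction and verify the diagonal-bijectivity axiom (e.g.\ that $(s+t)^2=1$ and that $1-s$ is a unit of $\tilde\Lambda$), whereas the paper's proof only writes out the necessity computations.
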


\begin{proof}
\begin{eqnarray*}
\tau \circ B\circ \tau \circ B (x,y) & = & \tau \circ B\circ \tau (sx+ty,rx) \\
& = & \tau \circ B (rx,sx+ty) \\
& = & \tau (srx+t(sx+ty),r^2x) \\
& = & (r^2x, t^2y+(t+r)sx)
\end{eqnarray*}
so setting $(\tau \circ B)^2=\mathrm{Id}$ we obtain $t^2=r^2=1$ and $(t+r)s=0$.

Finally, note that in a $(t,s,r)$-birack we have $S(u,v)=(rv,t^{-1}u-t^{-1}sv)$.
Then 
\begin{eqnarray*}
S\circ B(x,y) & = & S(sx+ty,rx) \\
& = & (r^2x,t^{-1}(sx+ty)-t^{-1}srx) \\
& = & (r^2x,y+(t^{-1}s-t^{-1}sr)x) \\
\end{eqnarray*} 
so $S\circ B=\mathrm{Id}$ implies $r^2=1$ and $t^{-1}s-t^{-1}sr=0$,
and multiplication by $t$ reduces the latter to $(1-r)s=0$.
\end{proof}

\begin{corollary}
A $(t,s)$-rack is involutory if and only if $t^2=1$ and $(t+1)s=0$.
\end{corollary}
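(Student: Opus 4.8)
The plan is to obtain this as an immediate specialization of the preceding Proposition. Recall that a $(t,s)$-rack is precisely a $(t,s,r)$-birack in which $r=1$, so the birack map is $B(x,y)=(sx+ty,x)$ and the characterization of involutory $(t,s,r)$-biracks applies to it without change. So the first step is simply to substitute $r=1$ into the four scalar conditions $t^2=1$, $r^2=1$, $(t+r)s=0$, $(1-r)s=0$ produced by the Proposition.

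Upon this substitution, the condition $r^2=1$ becomes the tautology $1=1$ and the condition $(1-r)s=0$ becomes $0=0$; neither carries any content. What survives is exactly $t^2=1$ together with $(t+r)s=0$, which with $r=1$ reads $(t+1)s=0$. This is precisely the asserted characterization, so the corollary follows.

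If one prefers a self-contained verification rather than an appeal to the Proposition, I would simply rerun the two short computations in the Proposition's proof with $r$ set to $1$: first, $\tau\circ B\circ\tau\circ B(x,y)=(x,\,t^2y+(t+1)sx)$, so demanding $(\tau\circ B)^2=\mathrm{Id}$ forces $t^2=1$ and $(t+1)s=0$; second, with $S(u,v)=(v,\,t^{-1}u-t^{-1}sv)$ one finds $S\circ B(x,y)=(x,\,y)$ identically, so the sideways condition $S\circ B=\mathrm{Id}$ imposes nothing further.

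There is essentially no obstacle here; the proof is a one-line specialization. The only point worth a moment's care is to confirm that the diagonal-bijectivity axiom in the definition of involutory birack introduces no extra constraint beyond those already recorded in the Proposition — but since that axiom is already built into the statement that a module over $\tilde\Lambda$ is a birack, and the Proposition characterizes when such a birack is involutory, nothing new can appear when we pass to $r=1$.
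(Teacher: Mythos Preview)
Your proposal is correct and matches the paper's approach exactly: the paper states this corollary without proof, treating it as an immediate specialization of the Proposition obtained by setting $r=1$, which is precisely what you do. Your optional self-contained recomputation is a nice bonus but not required.
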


\begin{corollary}
An Alexander biquandle is involutory if and only if $t^2=r^2=1$ and 
$(1-t)(1-r)=0$.
\end{corollary}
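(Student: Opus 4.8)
The plan is to read this off from the Proposition rather than work from the definition of involutory birack directly. Recall that an Alexander biquandle is precisely a $(t,s,r)$-birack of birack rank $N=1$. Since the kink map of a $(t,s,r)$-birack is $\pi(x)=(tr+s)x$, having rank $N=1$ amounts to the single equation $tr+s=1$, i.e.\ $s=1-tr$. So the first step is simply to substitute $s=1-tr$ into the three conditions handed to us by the Proposition: $t^2=r^2=1$, together with $(t+r)s=0$ and $(1-r)s=0$.

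The condition $t^2=r^2=1$ is phrased in $t$ and $r$ alone and survives unchanged, so the content is in the other two. For $(t+r)s$, expand $(t+r)(1-tr)=t+r-t^2r-tr^2$ and apply $t^2=r^2=1$ to obtain $t+r-r-t=0$; hence $(t+r)s=0$ holds automatically and contributes nothing. This is the exact analogue of what happens for an Alexander quandle, where $r=1$, $s=1-t$, and the condition $(t+1)s=(t+1)(1-t)=1-t^2=0$ is likewise automatic, leaving only $t^2=1$. Consequently the whole statement reduces to the single surviving relation $(1-r)s=(1-r)(1-tr)=0$.

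The last step is to expand $(1-r)(1-tr)$ using $r^2=1$ and to recognise the result as the claimed product of linear factors in $t$ and $r$. This is a one-line manipulation, and it is the only place where care is needed: it is precisely here that the sign convention in the birack operation $B(x,y)=(sx+ty,rx)$ --- equivalently, whether the rank-$1$ normalisation reads $s=1-tr$ or $s=1+tr$ --- decides whether the factor appearing alongside $1-r$ is $1-t$ or $1+t$. So in writing this out I would track that sign explicitly, re-deriving $(1-r)s$ in the rank-$1$ specialisation from scratch if necessary, and only then match it against $(1-t)(1-r)$ to conclude.
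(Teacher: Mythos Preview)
Your approach is exactly the paper's: substitute $s=1-tr$ into the Proposition's three conditions, observe that $(t+r)s=(t+r)(1-tr)=t-t^2r+r-tr^2$ vanishes once $t^2=r^2=1$, and then rewrite $(1-r)s=(1-r)(1-tr)$ using $r^2=1$ to get the claimed factorisation.

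Your instinct to track the sign carefully is well founded. Carrying out the last step honestly gives
\[
(1-r)(1-tr)=1-tr-r+tr^2=1-r+t-tr=(1+t)(1-r),
\]
not $(1-t)(1-r)$; the paper's own proof makes precisely this slip in the final equality ``$1-r+t(1-r)=(1-t)(1-r)$''. So when you ``match it against $(1-t)(1-r)$'' you will in fact find $(1+t)(1-r)$, and the corollary as stated should read $(1+t)(1-r)=0$ (or, equivalently after using $t^2=1$, one may rewrite further, but the two conditions $(1+t)(1-r)=0$ and $(1-t)(1-r)=0$ are not equivalent in general). Your method is correct; just be prepared for the computation to disagree with the printed statement at that one sign.
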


\begin{proof}
In an Alexander biquandle, we have $tr+s=1$ which implies $s=1-tr$. Then
\[(t+r)s=(t+r)(1-tr)=t-t^2r+r-tr^2\]which is zero provided $t^2=r^2=1$.
The condition that $(1-r)s=0$ is then
\[(1-r)(1-tr)=1-r-tr+tr^2=1-r-tr+t=1-r+t(1-r)=(1-t)(1-r)=0,\]
as required.
\end{proof}

These together imply the well-known result:
\begin{corollary}
An Alexander quandle is involutory if and only if $t^2=1$.
\end{corollary}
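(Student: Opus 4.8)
The plan is to derive this corollary immediately from the preceding one characterizing involutory Alexander biquandles. First I would recall that an Alexander quandle is precisely an Alexander biquandle in which $r=1$; equivalently, it is a $(t,s,r)$-birack with $r=1$ and birack rank $N=1$, so that $s=1-tr=1-t$. Consequently every statement already established for Alexander biquandles applies verbatim, and it only remains to specialize.

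Next I would substitute $r=1$ into the three conditions $t^2=r^2=1$ and $(1-t)(1-r)=0$ furnished by the previous corollary. The equation $r^2=1$ becomes $1=1$, hence vacuous; the equation $(1-t)(1-r)=0$ becomes $(1-t)\cdot 0=0$, also vacuous. Only $t^2=1$ survives. Since the conditions of that corollary were shown to be both necessary and sufficient for an Alexander biquandle to be involutory, this gives both directions of the equivalence at once.

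If one prefers a self-contained computation, I would instead substitute $r=1$ and $s=1-t$ directly into the formula $\tau\circ B\circ\tau\circ B(x,y)=(r^2x,\,t^2y+(t+r)sx)$ from the proposition, obtaining $(x,\,t^2y+(t+1)(1-t)x)=(x,\,t^2y+(1-t^2)x)$, and requiring this to equal $(x,y)$ for all $x,y$ forces $t^2=1$. One then notes that the sideways condition $S\circ B=\mathrm{Id}$ is automatically met, since with $r=1$ the surviving constraint $(1-r)s=0$ is trivial. I do not expect any real obstacle here; the only point that needs a little care is verifying that the $r=1$ and $N=1$ specializations genuinely render the auxiliary conditions vacuous rather than concealing an extra constraint, and the explicit substitution above makes this transparent.
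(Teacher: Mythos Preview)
Your proposal is correct and matches the paper's approach: the paper simply states that this corollary follows from the preceding ones (``These together imply the well-known result''), and your specialization of the Alexander biquandle corollary to $r=1$ is exactly such a derivation. The optional self-contained computation you offer is also valid and makes the vanishing of the auxiliary conditions explicit.
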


Let $X=\{x_1\dots, x_n\}$ be a finite set. We can specify an involutory 
birack structure on $X$ with a pair of $n\times n$ matrices specifying 
the operation tables of $y^x=B_1(x,y)$ and $x_y=B_2(x,y)$, i.e. 
$M_{(X,B)}=[U|L]$ where $U(i,j)=k$ and $L(i,j)=h$ where $x_k=B_1(x_j,x_i)$ and
$x_h=B_2(x_i,x_j)$. This allows us to do computations with biracks for 
which we lack convenient formulas.

\begin{example}
\textup{Let $X=\mathbb{Z}_n=\{1,2,3,4\}$. We can give $X$ the structure of an 
involutory  $(t,s,r)$-birack
on $X$ by choosing invertible elements $t,r\in \mathbb{Z}_n^{\ast}$ and an 
element $s\in\mathbb{Z}_n$ satisfying the conditions $s^2=(1-tr)s$, $t^2=s^2=1$ 
and $(1-r)s=(t+r)s=0$. For example, $X=\mathbb{Z}_4$ becomes an involutory
$(t,s,r)$-birack by setting $s=2$, $t=1$ and $r=3$. Then we have 
$s^2=4=0=(1-1(3))(2)$, $t^2=1$, $r^2=9=1$, $(t+r)s=(1+3)(2)=0$ and 
$(1-r)s=(1-3)2=0$. The birack matrix is given by}
\[M_{(X,B)}=\left[\begin{array}{rrrr|rrrr}
3 & 1 & 3 & 1  & 3 & 3 & 3 & 3 \\
4 & 2 & 4 & 2  & 2 & 2 & 2 & 2 \\
1 & 3 & 1 & 3  & 1 & 1 & 1 & 1 \\
2 & 4 & 2 & 4  & 4 & 4 & 4 & 4 \\
\end{array}\right].\]
\end{example}

\begin{remark}
\textup{The columns of the birack matrix are the images of the maps $u_x$ and 
$l_x$ mentioned in remark \ref{rmk1}. Hence, a birack whose matrix contains any 
column not representing an involution is not involutory.}
\end{remark}

\begin{example}
\textup{Let $K$ be a blackboard-framed classical or virtual knot or link 
diagram. The \textit{fundamental involutory birack} of $K$, denoted $IB(K)$, 
is the set of equivalence classes of involutory birack words modulo the 
equivalence relation generated by the involutory birack axioms and the 
crossing relations in $K$. More precisely, let:
\begin{itemize}
\item $G$, the set of \textit{generators}, correspond bijectively with the
set of semiarcs in $k$,
\item $W(G)$, the set of \textit{involutory birack words in G}, be defined 
inductively by the rules that (1) $G\subset W(G)$ and (2) $B(g,h)\in W(G)$ 
for all  $g,h\in W(G)$, and 
\item $\sim$ be the smallest equivalence relation on $W(G)$ containing each 
of the crossing relations together with 
\[(\tau \circ B)^2(g,h)\sim(g,h)\ \mathrm{and}\ 
(B\times \mathrm{Id})(\mathrm{Id}\times B)(B\times \mathrm{Id})(g,h,k)
\sim(\mathrm{Id}\times B)(B\times \mathrm{Id})(\mathrm{Id}\times B)(g,h,k)\]
for all $g,h,k\in W(G)$.
\end{itemize}
We will specify the fundamental involutory birack with a list of generators 
$G$ and crossing relations $R$, $IB(K)=\langle G|R\rangle$ with the birack 
axiom relations understood. For example, the trefoil knot below has the listed 
fundamental involutory birack presentation:}
\[\includegraphics{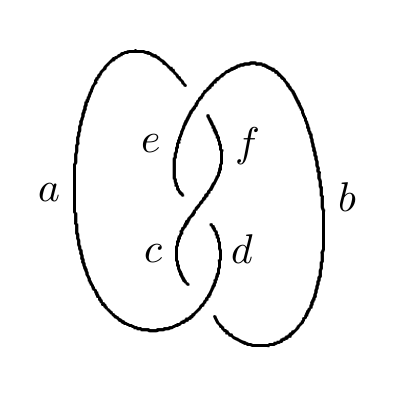} \quad \raisebox{0.75in}{$IB=\langle a,b,c,d,e,f \ | \
B(a,b)=(c,d), B(c,d)=(e,f), B(e,f)=(a,b)\rangle.$}\]
\textup{For virtual knots and links, we ignore virtual crossings, with semiarcs going 
from one classical over or undercrossing point to the next:}
\[\includegraphics{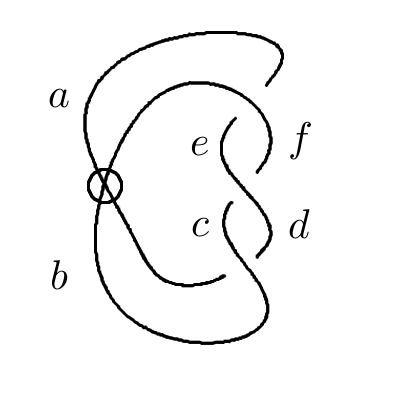} \quad \raisebox{0.75in}{$IB=\langle a,b,c,d,e,f \ | \
B(a,b)=(c,d), B(c,d)=(e,f), B(e,f)=(b,a)\rangle.$}\]
\end{example}

\begin{remark}
\textup{The fundamental involutory birack is analogous to the 
\textit{fundamental birack} $BR(L)$ of a knot or link (see \cite{FJK} 
or \cite{N}). Indeed, there is a functor $I:\mathbf{Br}\to \mathbf{IBr}$ 
from the category of finitely generated biracks to the category of finitely 
generated involutory biracks defined by setting $B^{-1}=\tau B\tau=S$ in
presentations of the objects of $\mathbf{Br}$ in addition to the 
inclusion functor $\mathbf{IBr}\to \mathbf{Br}$.}
\end{remark}

As with other algebraic structures, we have the following standard definitions:
\begin{definition}\textup{
A map $f:X\to Y$ between involutory biracks is a \textit{homomorphism}
if for all $x,y\in X$ we have}
\[B(f(x),f(y))=(f(B_1(x,y)),f(B_2(x,y))).\]
\end{definition}
\begin{definition}\textup{A subset $Y\subset X$ of an involutory birack $X$
is a \textit{subbirack} if $B(Y\times Y)\subset Y\times Y$.}
\end{definition}

\section{\large\textbf{Invariants of Unoriented Links}}\label{CE}

We begin this section by recalling the counting invariant of oriented 
classical and virtual links associated to a finite involutory birack 
defined in \cite{N}.
Let $X$ be a finite set and $B:X\times X\to X\times X$ a birack structure 
on $X$. Let $L$ be an oriented link diagram (classical or virtual) with $c$ 
components. A framing of $L$ is given by an element $\mathbf{w}\in 
\mathbb{Z}^c$, where the $k$th entry of $\mathbf{w}$ gives the 
\textit{writhe} of the $k$th component of $L$, i.e. the sum of the crossing
signs of the crossings where both strands are from component $k$ using the
convention below.
\[\includegraphics{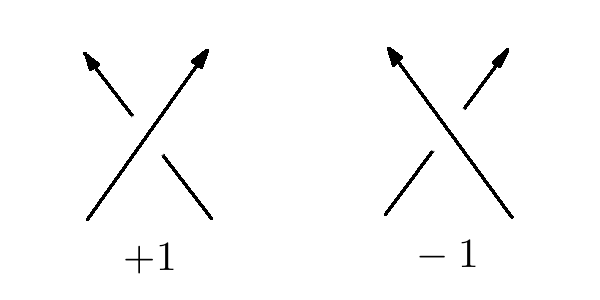}\]

A \textit{birack labeling} of $L$ by $X$ is a homomorphism $f:BR(L)\to X$
from the fundamental birack of $L$  to $X$. 
In particular, every homomorphism $f:BR(L)\to X$ assigns an element
of $X$ to each generator of $BR(L)$ and hence to each semiarc in $L$, and such
an assignment determines a homomorphism if and only if the crossing relations
are satisfied in $X$ by the assignment. Thus, we can visualize birack 
homomorphisms $f:BR(L)\to X$ as labelings of the semiarcs in a diagram of 
$L$ by their images in $X$.

By construction, changing a diagram by the \textit{blackboard-framed 
Reidemeister moves} 
\[\includegraphics{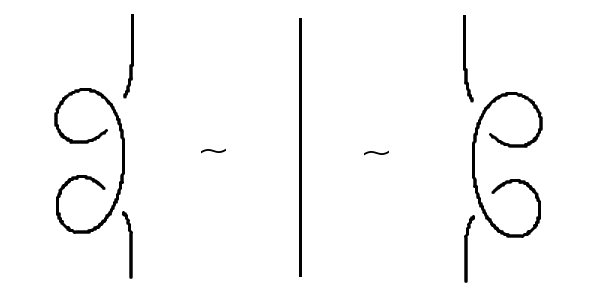}\quad \includegraphics{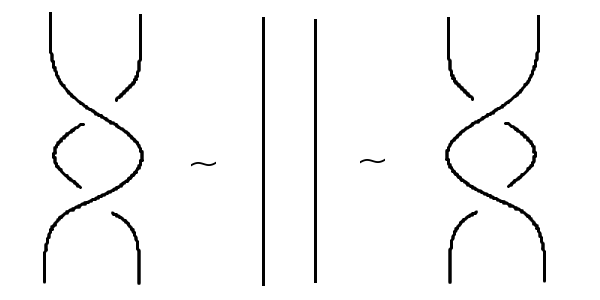}
\quad \includegraphics{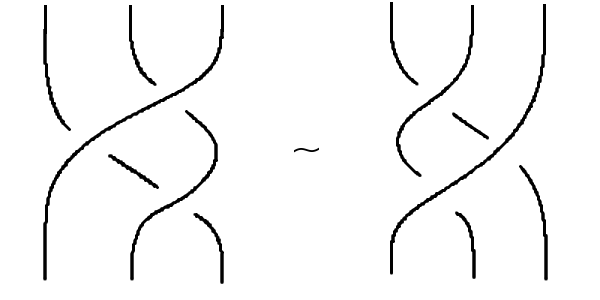}\]
induces a bijection on the sets of birack
homomorphisms. In particular, the number of birack labelings of a link
diagram by a finite birack $X$ is an integer-valued invariant of 
blackboard-framed isotopy. 

Now, let $N$ be the birack rank of $X$. For any birack labeling of $L$ by $X$,
there is a unique corresponding birack labeling of any framed link diagram
related to $L$ by blackboard framed Reidemeister moves together with the
\textit{$N$ phone cord move}:
\[\includegraphics{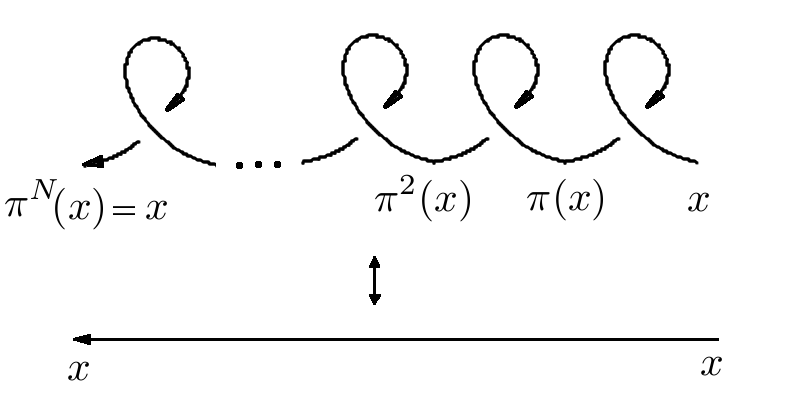}\]

Thus, the numbers of labelings $|\mathrm{Hom}(RB(L),X)|$ are periodic in the 
writhe of each component with period $N$, and we obtain an invariant of 
unframed isotopy by summing these numbers of labelings over a complete
period of writhes:
\[\Phi_{(X,B)}^{\mathbb{Z}}(L)=\sum_{\mathbf{w}\in(\mathbb{Z}_N)^c}
|\mathrm{Hom}(BR(L,\mathbf{w}),X)|\]
is the \textit{integral birack counting invariant}, defined in \cite{N}.

This same definition applies unmodified 
in the involutory birack case with unoriented links: while crossing signs are 
undefined for intercomponent crossings in unoriented links, they are 
well defined for intracomponent crossings since both choices of orientation
of a given component determine the same sign for each intracomponent crossing.
Thus, we have:

\begin{definition}
Let $L$ be an unoriented classical or virtual link with $c$ components and let
$(X,B)$ be a finite involutory birack. The \textit{integral involutory birack 
counting invariant} is
\[\Phi_{(X,B)}^{\mathbb{Z}}(L)=\sum_{\mathbf{w}\in(\mathbb{Z}_N)^c}
|\mathrm{Hom}(IB(L,\mathbf{w}),X)|.\]
\end{definition}

\begin{example}
\textup{The well-known Fox 3-coloring invariant is a special case of
the involutory birack counting invariant. Specifically, let $X=\mathbb{Z}_3$
with $t=2$, $s=2$ and $r=1$; then $s^2=1=(1-2(1))2=(1-tr)s$, so we have a 
$(t,s,r)$-birack. Moreover, $t^2=r^2=1$, $(t+r)s=3(2)=0$
and $(1-r)s=(1-1)2=0$, so $X$ is involutory, and $t+s=4=1$, so $X$ is a kei.
As a labeling rule, we have}
\[\includegraphics{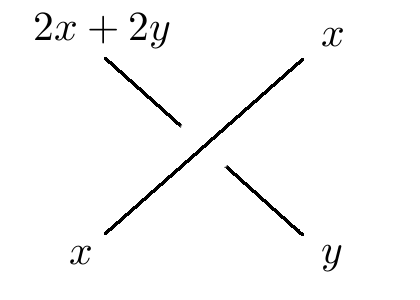}\]
\textup{which amounts to ``all three colors agree or all three are distinct''.
The fact that $\Phi_{(X,B)}^{\mathbb{Z}}(3_1)=9\ne 3=
\Phi_{(X,B)}^{\mathbb{Z}}(\mathrm{Unknot})$ is perhaps the easiest proof that the
trefoil is nontrivially knotted.}
\end{example}

An \textit{enhancement} of the birack counting invariant assigns a 
blackboard-framed and $N$-phone cord invariant signature to each birack 
labeling or homomorphism in $\mathrm{Hom}(BR(L),X)$; the multiset of such
signatures over a complete set of writhe vectors is then an enhanced 
invariant which determines the counting invariant value but is generally
stronger. All of the enhancements of $\Phi_{(X,B)}^{\mathbb{Z}}(L)$ defined
in \cite{N} are also defined for involutory biracks. These include:
\begin{itemize}
\item \textit{The image-enhanced counting invariant.} Here the signature is
the cardinality of the image subbirack:
\[\Phi_{(X,B)}^{\mathrm{Im}}(L)=\sum_{\mathbf{w}\in (\mathbb{Z}_N)^c}
\left(\sum_{f\in \mathrm{Hom}(BR(L,\mathbf{w}),X)} u^{|\mathrm{Im}(f)|}\right);\]
\item
\textit{The writhe-enhanced counting invariant.} Here we keep track of which
writhe vectors contribute which labelings:
\[\Phi_{(X,B)}^{\mathrm{Im}}(L)=\sum_{\mathbf{w}\in (\mathbb{Z}_N)^c}
\left(\sum_{f\in \mathrm{Hom}(BR(L,\mathbf{w}),X)} 
|\mathrm{Hom}(BR(L,\mathbf{w}),X)|q^{\mathbf{w}}\right)\]
where $q^{(w_1,\dots,w_c)}=q_1^{w_1}\dots q_c^{w_c}$, and
\item \textit{The birack polynomial enhanced counting invariant.} Here the 
signature is the subbirack polynomial $\rho_{\mathrm{Im}(f)\subset X}$ of the 
image subbirack (see \cite{N} for more):
\[\Phi_{(X,B)}^{\mathrm{Im}}(L)=\sum_{\mathbf{w}\in (\mathbb{Z}_N)^c}
\left(\sum_{f\in \mathrm{Hom}(BR(L,\mathbf{w}),X)} 
u^{\rho_{\mathrm{Im}(f)\subset X}}\right),\]
and 
\item \textit{The column group enhanced counting invariant.} Here the 
signature is the subgroup $CG(\mathrm{Im}(f))$ of $S_{|X|}$ generated by 
the permuations $u_x$ and $l_x$ for $x\in\mathrm{Im}(f)$:
\[\Phi_{(X,B)}^{CG}(L)=\sum_{\mathbf{w}\in (\mathbb{Z}_N)^c}
\left(\sum_{f\in \mathrm{Hom}(BR(L,\mathbf{w}),X)} u^{|CG(\mathrm{Im}(f))|}\right).\]
See \cite{HN} for more.
\end{itemize}

As an application, we are able to answer a question posed to the second
author by Xiao-Sing Lin in 2005: can birack counting invariants be used to
distinguish non-invertible knots from their inverses? We are happy to say 
that the answer is yes, as we demonstrate in the next example.

\begin{example}
\textup{Consider the virtual knot numbered $3.3$ in the knot atlas \cite{KA};
it is the closure of the virtual braid diagram below. If we orient the
braid first downward and then upward, we have the listed fundamental birack 
presentations, $B(3.3_{\downarrow})$ and $B(3.3_{\uparrow})$ respectively.}
\[\includegraphics{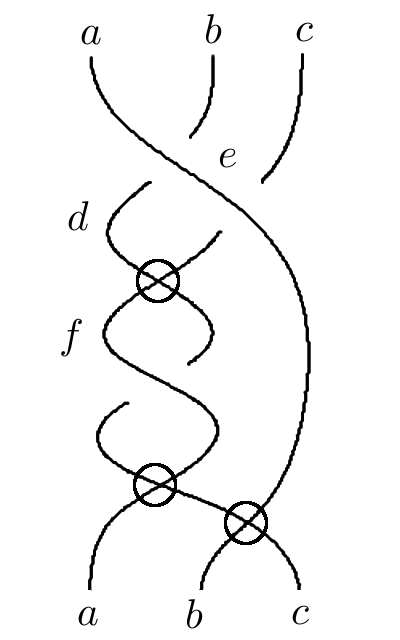} \quad \raisebox{1in}{$
\begin{array}{l}
B(3.3_{\downarrow})=\langle a,b,c,d,e,f \ |\ (b,a)=B(e,d),\ (c,e)=B(b,f),\ 
(d,f)=B(a,c) \rangle \\
 \ \\
 \ \\
B(3.3_{\uparrow})=\langle a,b,c,d,e,f \ |\ B(a,b)=(d,e),\ B(e,c)=(f,b),\ 
B(f,d)=(c,a) \rangle \\
\end{array}$}\]
\textup{Note that these are related by 
$B_{\downarrow}=\tau \circ B_{\uparrow}\circ \tau$, so both oriented versions of 
$3.3$ have the same fundamental involutory birack.
}

\textup{To see that the two oriented versions are non-isotopic, we will
need a non-involutory birack. Let $X=\mathbb{Z}_{11}$ and set $t=6$, $s=5$ 
and $r=3$. Then we have $s^2=5^2=(1-6(3))5=(1-tr)s$ and we have a 
$(t,s,r)$-birack; moreover, $t^2=6^2=3\ne 1$ and $X$ is non-involutory.
Since $\mathbb{Z}_{11}$ is a field, $s$ is invertible and $X$ is biquandle,
so $N=1$ and we do not need to compute labelings for multiple writhes.}

\textup{Then setting $B(x,y)=(5x+6y,3x)$, the crossing relations for 
$B_{\downarrow}$ give us a system of linear equations over $\mathbb{Z}_{11}$
with homogeneous matrix}
\[
\begin{array}{rcl}
b & = & 5e+6d \\
a & = & 3e \\
c & = & 5b+6f \\
e & = & 3b \\
d & = & 5a+6c \\
f & = & 3a \\
\end{array}
\Rightarrow
\left[\begin{array}{rrrrrr}
0 & 10 & 0 & 6 & 5 & 0 \\
10 & 0 & 0 & 0 & 3 & 0 \\
0 & 5 & 10 & 0 & 0 & 6 \\
0 & 3 & 0 & 0 & 10 & 0 \\
5 & 0 & 6 & 10 & 0 & 0 \\
3 & 0 & 0 & 0 & 0 & 10 \\
\end{array}\right]
\mathrm{\ is\ row\ equivalent \ to\ }
\left[\begin{array}{rrrrrr}
1 & 0 & 0 & 0 & 0 & 0 \\
0 & 1 & 0 & 0 & 0 & 0 \\
0 & 0 & 1 & 0 & 0 & 0 \\
0 & 0 & 0 & 1 & 0 & 0 \\
0 & 0 & 0 & 0 & 1 & 0 \\
0 & 0 & 0 & 0 & 0 & 1 \\
\end{array}\right]
\]
\textup{which has full rank, and hence the only solution is the trivial
labeling of all semiarcs by $0$, and the counting invariant is
$\Phi_{(X,B)}^{\mathbb{Z}}(3.3_{\downarrow})=u^1=u$. On the other hand, the 
crossing relations for $B(3.3_{\uparrow})$ give us the system of linear 
equations over $\mathbb{Z}_{11}$ with homogeneous matrix}
\[
\begin{array}{rcl}
d & = & 5a+6b \\
e & = & 3a \\
f & = & 5e+6c \\
b & = & 3e \\
c & = & 5f+6d \\
a & = & 3f \\
\end{array}
\Rightarrow
\left[\begin{array}{rrrrrr}
5 & 6 & 0 & 10 & 0 & 0 \\
3 & 0 & 0 & 0 & 10 & 0 \\
0 & 0 & 6 & 0 & 5 & 10 \\
0 & 10 & 0 & 0 & 3 & 0 \\
0 & 0 & 10 & 6 & 0 & 5 \\
10 & 0 & 0 & 0 & 0 & 3
\end{array}\right]
\mathrm{\ is\ row\ equivalent \ to\ }
\left[\begin{array}{rrrrrr}
1 & 0 & 0 & 0 & 0 & 8 \\
0 & 1 & 0 & 0 & 0 & 6 \\
0 & 0 & 1 & 0 & 0 & 0 \\
0 & 0 & 0 & 1 & 0 & 10 \\
0 & 0 & 0 & 0 & 1 & 2 \\
0 & 0 & 0 & 0 & 0 & 0
\end{array}\right]
\]
\textup{which has rank 1, and thus the space of solutions is $1$-dimensional,
giving us a counting invariant value of
$\Phi_{(X,B)}^{\mathbb{Z}}(3.3_{\uparrow})=u^{11}\ne u$, and the counting invariant
detects the non-invertibility of the virtual knot $3.3$.}
\end{example}

\section{\large\textbf{Questions}}\label{Q}

In this section we collect a few questions for future research.

What new enhancements of the counting invariant require $X$ to be 
involutory? Does the condition that $(X,B)$ is involutory determine
anything about the homology groups, column groups or birack polynomials 
of $(X,B)$?

In remark \ref{rmk1} we observed that the component maps of an
involutory birack must be involutions. Is the converse true? That is,
do the conditions $(\tau \circ B)^2=\mathrm{Id}$ and $S\circ B=\mathrm{Id}$
follow from the condition that $u_x$ and $l_x$ are involutions for 
all $x\in X$?

What conditions on two non-involutory finite biracks $B,B'$ imply that 
$I(B)\cong I(B')$? That is, when do two non-involutory birack involutize
to the same involutory birack?

\bigskip

\noindent
\textsc{Department of Mathematical Sciences \\
Claremont McKenna College \\
850 Columbia Ave. \\
Claremont, CA 91711} 

\bigskip

\noindent
\textsc{Dept. of Mathematics \\
5734 S. University Avenue \\
Chicago, Illinois 60637}

\end{document}